\newtheorem{theorem}{Theorem}
\newtheorem{corollary}[theorem]{Corollary}
\newcommand\commentout[1]{}
\newcommand\Def[1]{\emph{#1}}
\newcommand{\lcm}{\operatorname{lcm}}
\def\l{\lambda}
\def\0{\mathbf{0}}
\def\1{\mathbf{1}}
\newcommand\qbinom[2]{\left[ {{#1} \atop {#2}} \right]}
\begin{document}

\title{Partitions with fixed differences between largest and smallest parts}

\author{George E. Andrews}
\address{Department of Mathematics, The Pennsylvania State University, University Park, PA 16802, USA}
\email{andrews@math.psu.edu}
\author{Matthias Beck}
\author{Neville Robbins}
\address{Department of Mathematics, San Francisco State University, San Francisco, CA 94132, USA}
\email{[mattbeck,nrobbins]@sfsu.edu}

\keywords{Integer partition, fixed difference between largest and smallest parts, rational generating function, quasipolynomial.}

\subjclass[2010]{Primary 11P84; Secondary 05A17.}

\date{19 March 2015}

\thanks{We thank an anonymous referee for numerous helpful suggestions.
M.\ Beck's research was partially supported by the US National Science Foundation (DMS-1162638).}

\maketitle

\begin{abstract}
We study the number $p(n,t)$ of partitions of $n$ with difference $t$ between largest and smallest parts.
Our main result is an explicit formula for the generating function $P_t(q) := \sum_{ n \ge 1 } p(n,t) \, q^n$.
Somewhat surprisingly, $P_t(q)$ is a rational function for $t>1$; equivalently, $p(n,t)$ is a quasipolynomial in $n$ for fixed $t>1$.
Our result generalizes to partitions with an arbitrary number of specified distances.
\end{abstract}


\vspace{12pt}

Enumeration results on integer partitions form a classic body of mathematics going back to at least Euler, including numerous applications throughout mathematics and some areas of physics; see, e.g., \cite{andrewstheoryofpartitions}.
A \Def{partition} of a positive integer $n$ is, as usual, an integer $k$-tuple $\l_1 \ge \l_2 \ge \dots \ge \l_k > 0$, for some $k$, such that
\[
  n = \l_1 + \l_2 + \dots + \l_k \, .
\] 
The integers $\l_1, \l_2, \dots, \l_k$ are the \Def{parts} of the partition.
We are interested in the counting function
\[
  p(n,t) := \# \text{partitions of $n$ with difference $t$ between largest and smallest parts} .
\]
It is immediate that
\[
  p(n,0) = d(n)
\]
where $d(n)$ denotes the number of divisors of $n$. 
Charmingly, $p(n,1)$ equals the number of \emph{non}divisors of $n$:
\[
  p(n,1) = n - d(n) \, ,
\]
which can be explained bijectively by the fact that the partitions counted by $p(n,0) + p(n,1)$
contain exactly one sample with $k$ parts, for each $k = 1, 2, \dots, n$ \cite[Sequence A049820]{sloaneonlineseq}, or by the generating function identity
\[
  \sum_{ n \ge 1 } p(n,1) \, q^n
  = \sum_{ m \ge 1 } \frac{ q^m }{ 1-q^m } \, \frac{ q^{ m+1 } }{ 1-q^{ m+1 } } 
  = \frac{ q }{ (1-q)^2 } - \sum_{ m \ge 1 } \frac{ q^m }{ 1-q^m } \, .
\]
(The last equation follows from a few elementary operations on rational functions).
An even less obvious instance of our partition counting function is
\begin{equation}\label{eq:caset2}
  p(n,2) = \binom{ \left\lfloor \frac n 2 \right\rfloor }{ 2 } \, ,
\end{equation}
as observed by Reinhard Zumkeller in 2004 \cite[Sequence A008805]{sloaneonlineseq}.
(It is not clear to us where in the literature this formula first appeared, though specific values
of $p(n,k)$ are well represented in \cite{sloaneonlineseq}, where Sequences A000005, A049820,
A008805, A128508, and A218567--A218573 give the first values of $p(n,k)$ for fixed $k=0, 1, \dots,
10$, and Sequence A097364 paints a general picture of $p(n,t)$.)

We remark that $p(n,2)$ is arithmetically quite different from $p(n,0)$ and $p(n,1)$: namely, $p(n,2)$
is a \Def{quasipolynomial}, i.e., a function that evaluates to a polynomial when $n$ is restricted to
a fixed residue class modulo some (minimal) positive integer, the \Def{period} of the quasipolynomial. (For $p(n,2)$ this period is
2.) Equivalently, the accompanying generating function evaluates to a rational function all of whose poles are rational roots of unity. (See, e.g., \cite[Chapter 4]{stanleyec1} for more on quasipolynomials and their rational generating functions.)
Our goal is to prove closed formulas for these generating functions
\[
  P_t(q) := \sum_{ n \ge 1 } p(n,t) \, q^n .
\]

\begin{theorem}\label{thm:main}
For $t > 1$,
\begin{align*}
  P_t(q)
  &= \frac{ q^{ t-1 } (1-q) }{ (1-q^t) (1-q^{ t-1 }) }
   - \frac{ q^{ t-1 } }{ (1-q^t)^2 (1-q^{ t-1 })^2 (1-q^{ t-2 }) \cdots (1-q^2) } \\
  &\qquad + \frac{ q^t }{ (1-q^t) (1-q^{ t-1 })^2 (1-q^{ t-2 }) \cdots (1-q) } \, .
\end{align*}
\end{theorem}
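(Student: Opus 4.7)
My approach is to express $P_t(q)$ as an explicit infinite sum and then reduce it to a finite sum of rational functions through iterated partial-fraction manipulations.

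First, I would parametrize each partition counted by $p(n,t)$ by its smallest part $s \geq 1$ together with the multiplicities $m_0, m_1, \ldots, m_t$ of the consecutive possible parts $s, s+1, \ldots, s+t$, requiring $m_0, m_t \geq 1$ and all other $m_i \geq 0$. The contributions factor, giving
\[
  P_t(q) = \sum_{s \geq 1} \frac{q^s}{1-q^s} \cdot \frac{q^{s+t}}{1-q^{s+t}} \cdot \prod_{i=1}^{t-1} \frac{1}{1-q^{s+i}} = \sum_{s \geq 1} \frac{q^{2s+t}}{\prod_{i=0}^{t}(1-q^{s+i})}.
\]

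Next, the partial-fraction identity
\[
  \frac{1}{(1-q^s)(1-q^{s+t})} = \frac{1}{1-q^t}\left(\frac{1}{1-q^s} - \frac{q^t}{1-q^{s+t}}\right)
\]
splits the summand into two pieces whose denominators each contain only $t$ (rather than $t+1$) of the factors $(1-q^{s+i})$. Re-indexing $s \mapsto s-1$ in the second piece aligns the two infinite sums, leaving only a boundary contribution at $s=1$. The resulting identity has the form
\[
  P_t(q) = \frac{q^t(1-q^{t-2})}{1-q^t}\,U_t(q) + \frac{q^{2t}}{(1-q^t)\prod_{j=1}^t(1-q^j)},
\]
where $U_t(q) := \sum_{s \geq 1}\frac{q^{2s}}{\prod_{j=s}^{s+t-1}(1-q^j)}$.

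I would then apply the same partial-fraction trick again, now to the outer pair $(1-q^s)(1-q^{s+t-1})$ of factors appearing inside $U_t(q)$. This yields the linear recursion
\[
  U_t(q) = \frac{1-q^{t-3}}{1-q^{t-1}}\,U_{t-1}(q) + \frac{q^{t-1}}{(1-q^{t-1})\prod_{j=1}^{t-1}(1-q^j)}.
\]
The crucial feature is that the homogeneous coefficient $1-q^{t-3}$ vanishes when $t=3$, which forces $U_3(q) = \frac{q^2}{(1-q)(1-q^2)^2}$ and, by iteration, expresses $U_t(q)$ as a \emph{finite} sum of rational functions whose "initial condition" $U_2(q)$ (itself not a rational function) never enters. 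Substituting this closed form for $U_t(q)$ into the expression for $P_t(q)$ above, and collecting the resulting rational pieces over a common denominator, should produce the three-term formula of the theorem.

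The main obstacle will be the final step: the iterated closed form for $U_t(q)$ is naturally a sum of roughly $t$ rational summands, so matching it against the compact three-term expression in the theorem — with its distinctive structure of one simple piece plus two corrections carrying double poles at $q^{t-1}=1$ and at $q^t=1$ — requires careful cancellation bookkeeping. Identifying how the double poles emerge from the iteration, and why only two "levels" of the recursion survive into the final answer (the rest telescoping away), is the technical heart of the argument.
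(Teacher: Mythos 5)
Your strategy is sound and it is a genuinely different route from the paper's. The paper starts from the same generating function $\sum_{m\ge1} q^{2m+t}\big/\prod_{i=0}^{t}(1-q^{m+i})$ but then applies Heine's transformation, which converts the infinite $q$-series into a \emph{terminating} sum (because the factor $(q^{-t+2})_j$ vanishes for $j\ge t-1$), and finishes with the $q$-binomial theorem. You instead use only the elementary splitting $\frac{1}{(1-q^s)(1-q^{s+t})}=\frac{1}{1-q^t}\left(\frac{1}{1-q^s}-\frac{q^t}{1-q^{s+t}}\right)$, and I have checked that both of your displayed consequences are correct: the decomposition of $P_t(q)$ in terms of $U_t(q)$, and the recursion $U_t(q)=\frac{1-q^{t-3}}{1-q^{t-1}}U_{t-1}(q)+\frac{q^{t-1}}{(1-q^{t-1})(q)_{t-1}}$ whose homogeneous coefficient indeed vanishes at $t=3$ (and already at the first decomposition for $t=2$, which instantly gives $P_2(q)=q^4/\big((1-q)(1-q^2)^2\big)$). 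The final step you flag as the technical heart does go through: the product of the coefficients $\frac{1-q^{l-3}}{1-q^{l-1}}$ telescopes, yielding $U_t(q)=\frac{1}{(1-q^{t-2})(1-q^{t-1})}\sum_{i=3}^{t}\frac{q^{i-1}}{(q)_{i-3}(1-q^{i-1})}$, and after substituting into your expression for $P_t(q)$ the whole identity reduces to $\sum_{i=3}^{t+1}\frac{q^{i-1}}{(q)_{i-3}(1-q^{i-1})}=\frac{1-q}{q}\left(1-\frac{1}{(q)_t}\right)+\frac{1}{(q)_{t-1}}$, which is a one-line induction on $t$; this reproduces the three-term formula exactly. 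What your route buys is a completely elementary, self-contained argument that makes the onset of rationality at $t=2$ transparent (the recursion bottoms out before ever reaching the non-rational $U_2$); what the paper's route buys is brevity and the fact that the identical Heine computation extends with no additional ideas to the multi-distance generalization of Theorem \ref{thm:specdist}, where your method would require a $k$-fold nested iteration of the partial-fraction step.
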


Written in terms of the usual shorthand $(q)_m := (1-q) (1-q^2) \cdots (1-q^m)$, Theorem \ref{thm:main} says
\[
  P_t(q) = \frac{ q^{ t-1 } (1-q) }{ (1-q^t) (1-q^{ t-1 }) } - \frac{ q^{ t-1 } (1-q) }{ (1-q^t)
(1-q^{ t-1 }) (q)_t } + \frac{ q^t }{ (1-q^{ t-1 }) (q)_t } \, .
\]
Thus $P_t(q)$ is rational for $t>1$, and so $p(n,t)$ is a quasipolynomial in $n$, of degree $t$ and period $\lcm(1, 2, \dots, t)$.
For example, for $t=2$, Theorem \ref{thm:main} gives
\[
  P_2(q)
  = \frac{ q^4 }{ (1-q)^3 (1+q)^2 }
\]
which confirms \eqref{eq:caset2}.
The rational generating function given by Theorem \ref{thm:main} in the case $t=3$ simplifies to
\[
  P_3(q)
  = \frac{ q^5 + q^6 + q^7 - q^8 }{ (1 - q^2)^2 (1 - q^3)^2 } 
\]
which (by way of a computer algebra system or a straightforward binomial expansion) translates to the partition counting function
\begin{align*}
  p(n,3) &= \frac{ 1 }{ 108 } \times \begin{cases}
    n^3 - 18n & \text{ if } n \equiv 0 \bmod 6 , \\
    n^3 - 3n + 2 & \text{ if } n \equiv 1 \bmod 6 , \\
    n^3 - 30n + 52 & \text{ if } n \equiv 2 \bmod 6 , \\
    n^3 + 9n - 54 & \text{ if } n \equiv 3 \bmod 6 , \\
    n^3 - 30n + 56 & \text{ if } n \equiv 4 \bmod 6 , \\
    n^3 - 3n - 2 & \text{ if } n \equiv 5 \bmod 6
  \end{cases} 
\end{align*}
\begin{align*}
  &= \begin{cases}
    m(2m^2 -1) & \text{ if } n = 6m , \\
    m (2m^2 + 1) & \text{ if } n = 6m+1 , \\
    m(2m^2 + 2m -1) & \text{ if } n = 6m+2 , \\
    m(2m^2 + 3m + 2) & \text{ if } n = 6m+3 , \\
    (m-1)(2m^2 -1) & \text{ if } n = 6m-2 , \\
    m^2 (2m-1) & \text{ if } n = 6m-1 .
  \end{cases}
\end{align*}
Using this explicit form of $p(n,3)$, one easily affirms a conjecture about the recursive structure
of $p(n,3)$ given in \cite[Sequence A128508]{sloaneonlineseq} in the positive.

\begin{proof}[Proof of Theorem \ref{thm:main}]
We will use the usual shorthand
\[
  (A)_m := (1-A) (1-A \, q) \cdots (1-A \, q^{ m-1 })
\]
as well as \emph{Heine's transformation} (see, e.g., \cite[p.~38]{andrewstheoryofpartitions})
\begin{equation}\label{eq:heine}
  \sum_{ m \ge 0 } \frac{ (a)_m (b)_m \, z^m }{ (q)_m (c)_m } = \frac{ ( \frac c b )_\infty (bz)_\infty }{ (c)_\infty (z)_\infty } \sum_{ j \ge 0 } \frac{ ( \frac{ abz }{ c } )_j (b)_j ( \frac c b )^j }{
(q)_j (bz)_j }  \, .
\end{equation}
Now we construct the generating function for $p(n,t)$.
A partition of $n$ with difference $t$ between smallest and largest part starts with some part $m$, ends with the part $m+t$, and
could include any of the numbers $m+1, m+2, \dots, m+t-1$ as parts. Translated into geometric series, this gives
\begin{align*}
  P_t(q)
  &= \sum_{ m \ge 1 } \frac{ q^m }{ 1-q^m } \, \frac{ 1 }{ 1-q^{ m+1 } } \cdots \frac{ 1 }{ 1-q^{ m+t-1 } } \, \frac{ q^{ m+t } }{ 1-q^{ m+t } }
   = q^t \sum_{ m \ge 1 } \frac{ q^{ 2m } (q)_{ m-1 } }{ (q)_{ m+t } }
   = q^{ t+2 } \sum_{ m \ge 0 } \frac{ q^{ 2m } (q)_m }{ (q)_{ m+t+1 } } \\
  &= \frac{ q^{ t+2 } }{ (q)_{ t+1 } } \sum_{ m \ge 0 } \frac{ (q)_m (q)_m \, q^{ 2m } }{ (q)_m (q^{ t+2 } )_m }
   \stackrel{ \eqref{eq:heine} }{ = } \frac{ q^{ t+2 } (q^{ t+1 })_\infty (q^3)_\infty }{ (q)_{ t+1 }
(q^{ t+2 })_\infty (q^2)_\infty } \sum_{ j \ge 0 } \frac{ (q^{ -t+2 })_j (q)_j \, q^{ j(t+1) } }{ (q)_j
(q^3)_j } \\
  &= \frac{ q^{ t+2 } }{ (q)_t } \sum_{ j=0 }^{ t-2 } \frac{ (q^{ -t+2 })_j \, q^{ j(t+1) } }{ (q^2)_{
j+1 } }
   = \frac{ q^{ t+2 } }{ (q)_t } \sum_{ j=0 }^{ t-2 } \frac{ (1-q^{ t-2 }) (1-q^{ t-3 }) \cdots (1-q^{
t-j-1 }) (-1)^j q^{ 2j + \binom{ j+1 }{ 2 } } }{ (q^2)_{ j+1 } } \\
  &= \frac{ q^{ t+2 } (1-q) }{ (1-q^t) (1-q^{ t-1 }) } \sum_{ j=0 }^{ t-2 } \frac{ (-1)^j q^{ 2j +
\binom{ j+1 }{ 2 } } }{ (q)_{ j+2 } (q)_{ t-j-2 } }
   = \frac{ q^{ t-1 } (1-q) }{ (1-q^t) (1-q^{ t-1 }) (q)_t } \sum_{ j=0 }^{ t-2 } \qbinom{ t }{ j+2 }
(-1)^j q^{ \binom{ j+3 }{ 2 } } \, .
\end{align*}
Thus, by the $q$-binomial theorem (see, e.g., \cite[p.~36]{andrewstheoryofpartitions})
\begin{align*}
  P_t(q)
  &= \frac{ q^{ t-1 } (1-q) }{ (1-q^t) (1-q^{ t-1 }) (q)_t } \sum_{ j=2 }^{ t } \qbinom{ t }{ j }
(-1)^j q^{ \binom{ j+1 }{ 2 } }
   = \frac{ q^{ t-1 } (1-q) }{ (1-q^t) (1-q^{ t-1 }) (q)_t } \left( (q)_t - 1 + q \qbinom t 1 \right)
\\
  &= \frac{ q^{ t-1 } (1-q) }{ (1-q^t) (1-q^{ t-1 }) } - \frac{ q^{ t-1 } (1-q) }{ (1-q^t) (1-q^{ t-1
}) (q)_t } + \frac{ q^t }{ (1-q^{ t-1 }) (q)_t } \, . \qedhere
\end{align*}
\end{proof}

A natural question concerns the growth behavior of $p(n,t)$. We see in the above example that the quasipolynomial $p(n,3)$ has a
constant leading coefficient, which of course determines the asymptotic growth of $p(n,3)$. Something similar can be said in general.

\begin{corollary}\label{cor:asymptotics}
If $t > 1$ then $p(n,t) = \dfrac{ n^t }{ t \, (t!)^2 } + O(n^{ t-1 })$ as $n \to \infty$.
\end{corollary}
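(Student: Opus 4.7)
The plan is to extract the leading asymptotic from the closed form of $P_t(q)$ in Theorem~\ref{thm:main}. Since $P_t(q)$ is rational with all poles at roots of unity, $p(n,t)$ is a quasipolynomial of degree $t$, and the coefficient of $n^t$ is governed by the pole of maximal order at $q=1$; lower-order poles at $q=1$ and poles at non-trivial roots of unity only shift $p(n,t)$ by $O(n^{t-1})$. The goal is to show that the contribution of the top-order pole at $q=1$ is the constant $\tfrac{1}{t(t!)^2}$, independent of $n \bmod \lcm(1,\dots,t)$.

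First I would factor $1-q^k=(1-q)(1+q+\cdots+q^{k-1})$ throughout and count $(1-q)$-multiplicities in each of the three summands in Theorem~\ref{thm:main}, using that $(1+q+\cdots+q^{k-1})|_{q=1}=k$. The first summand $\frac{q^{t-1}(1-q)}{(1-q^t)(1-q^{t-1})}$ has only a simple pole at $q=1$ (the numerator kills one factor), so it contributes $O(1)$ and may be ignored. The second summand acquires a pole of order $2+2+(t-3)=t+1$ at $q=1$, and its residual factor at $q=1$ is $t^2(t-1)^2(t-2)! = t(t-1)\cdot t!$. The third summand similarly has a pole of order $1+2+(t-2)+1=t+1$, with residual factor $t(t-1)^2(t-2)! = (t-1)\cdot t!$.

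Invoking the standard expansion $[q^n](1-q)^{-(t+1)}=\binom{n+t}{t}=\tfrac{n^t}{t!}+O(n^{t-1})$, the three summands contribute to the coefficient of $n^t$ the quantities $0$, $-\tfrac{1}{t(t-1)(t!)^2}$, and $+\tfrac{1}{(t-1)(t!)^2}$, respectively. Summing yields
\[
\frac{1}{(t-1)(t!)^2}\Bigl(1-\tfrac{1}{t}\Bigr) = \frac{1}{t\,(t!)^2},
\]
which is exactly the claimed leading coefficient.

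The main (and only) obstacle is the careful bookkeeping of $(1-q)$-multiplicities and the observation that the coefficient at $q=1$ is a genuine constant rather than a periodically-varying number, so that the leading term of the quasipolynomial is $n$-independent; once that is verified by the residue computation above, the $O(n^{t-1})$ error is immediate from the general theory of rational generating functions with poles at roots of unity (see, e.g., \cite[Chapter~4]{stanleyec1}).
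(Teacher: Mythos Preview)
Your argument is correct and follows essentially the same approach as the paper's: both extract the leading Laurent coefficient of $P_t(q)$ at $q=1$ from the closed form in Theorem~\ref{thm:main} and divide by $t!$; the only cosmetic difference is that you treat the second and third summands separately, whereas the paper first puts them over the common denominator $(1-q^t)^2(1-q^{t-1})^2(1-q^{t-2})\cdots(1-q)$, obtaining the numerator $2q^t-q^{2t}-q^{t-1}$, and then takes a single limit. (One small slip: your pole-order count for the third summand should read $1+2+(t-2)=t+1$; as written, $1+2+(t-2)+1=t+2$, though your subsequent computation correctly uses order $t+1$.)
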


\begin{proof} 
It is well known that the first-order asymptotics of a quasipolynomial stems from the highest-order poles of its rational generating
function. (This follows from first principles, essentially partial-fraction decomposition; see \cite{flajoletsedgewick} for
far-reaching generalizations.)
In our case, $P_t(q)$ has a unique highest-order pole at $q=1$ of order $t$.
Thus the leading coefficient of $p(n,t)$ equals $\frac{ 1 }{ t! }$ times the lowest coefficient of the Laurent series of $P_t(q)$ at
$q=1$ which is
\[
  \lim_{ q \to 1 } \frac{ (1-q)^{ t+1 } (2q^t - q^{ 2t } - q^{ t-1 }) }{ (1-q^t)^2 (1-q^{ t-1 })^2 (1-q^{ t-2 }) \cdots (1-q) }
  = \frac{ 1 }{ t \cdot t! } \, . \qedhere
\]
\end{proof}

Next we shall generalize Theorem \ref{thm:main} by considering \emph{partitions with specified distances}.
Let $p(n, t_1, t_2, \dots, t_k)$ be the number of partitions of $n$ such that, if $\sigma$ is the smallest part then $\sigma + t_1 + t_2 + \dots + t_k$ is the largest part and
each of $\sigma + t_1$, $\sigma + t_1 + t_2$, \dots, $\sigma + t_1 + t_2 + \dots + t_{k-1}$ appear as parts.
We consider the related generating function
\[
  P_{ t_1, \dots, t_k } (q) := \sum_{ n \ge 1 } p(n, t_1, t_2, \dots, t_k) \, q^n .
\]
We note that when $k=1$ this is simply $P_t(q)$ from above.

\begin{theorem}\label{thm:specdist}
For $t := t_1 + t_2 + \dots + t_k > k$,
\[
  P_{ t_1, \dots, t_k } (q) = \frac{ (-1)^k q^{ T - \binom{ k+1 }{ 2 } } \left( \sum_{ j=0 }^{ k } \qbinom t j (-1)^j q^{ \binom{ j+1 }{ 2 } } - (q)_t \right) }{ \qbinom{ t-1} k (1-q^t) (q)_t } \, ,
\]
where $T := k t_1 + (k-1) t_2 + \dots + 2 t_{ k-1 } + t_k$
and $\qbinom A B := \frac{ (q)_A }{ (q)_B (q)_{ A-B } }$. 
\end{theorem}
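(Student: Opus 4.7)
The plan is to mirror the proof of Theorem~\ref{thm:main} step by step: (i) write the generating function for $p(n,t_1,\dots,t_k)$ as a sum over the smallest part $m$; (ii) apply Heine's transformation \eqref{eq:heine} to the resulting $q$-series; and (iii) finish via the $q$-binomial theorem and simplify.

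For step (i), a partition counted by $p(n,t_1,\dots,t_k)$ with smallest part $m$ contains each of the $k+1$ forced parts $m+s_i$, where $s_i := t_1+\dots+t_i$ (so $s_0=0$ and $s_k=t$), while parts with value in $\{m, m+1, \dots, m+t\} \setminus \{m+s_0,\dots,m+s_k\}$ may occur with arbitrary multiplicity. The forced parts contribute $\prod_{i=0}^k \frac{q^{m+s_i}}{1-q^{m+s_i}}$ and the free ones $\prod_{j} \frac{1}{1-q^{m+j}}$; after observing $\sum_{i=0}^{k} s_i = T$ and combining the two products into $\prod_{j=0}^t(1-q^{m+j}) = (q)_{m+t}/(q)_{m-1}$, this collapses to
\[
  P_{t_1,\dots,t_k}(q) = q^{T} \sum_{m \ge 1} \frac{q^{(k+1)m}(q)_{m-1}}{(q)_{m+t}} = \frac{q^{T+k+1}}{(q)_{t+1}} \sum_{m \ge 0} \frac{(q)_m (q)_m\, q^{(k+1)m}}{(q)_m (q^{t+2})_m}
\]
after shifting $m \mapsto m+1$, which is a Heine-ready shape.

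For step (ii), invoke \eqref{eq:heine} with $a=b=q$, $c=q^{t+2}$, $z=q^{k+1}$. The prefactor simplifies to $\frac{1-q^{t+1}}{1-q^{k+1}}$ and the transformed sum has general term $\frac{(q^{k+1-t})_j\, q^{(t+1)j}}{(q^{k+2})_j}$, which vanishes for $j \ge t-k$ since $(q^{k+1-t})_j$ then carries a factor $(1-q^0)$. For step (iii), rewrite the negative-exponent Pochhammer using $(1-q^{-n}) = -q^{-n}(1-q^n)$ to get
\[
  (q^{k+1-t})_j = (-1)^j q^{-j(t-k-1)+\binom{j}{2}}\, \frac{(q)_{t-k-1}}{(q)_{t-k-1-j}} \, ,
\]
and re-index via $\ell := j+k+1$ so that $\ell$ runs over $\{k+1,\dots,t\}$. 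A short computation reduces the total $q$-exponent to $\binom{\ell+1}{2}-\binom{k+2}{2}$ and the product of $(q)$-factors to $\frac{(q)_{t-k-1}(q)_{k+1}}{(q)_t}\qbinom{t}{\ell}$. The $q$-binomial theorem, exactly as at the end of the proof of Theorem~\ref{thm:main}, then supplies
\[
  \sum_{\ell=k+1}^{t} \qbinom{t}{\ell}(-1)^\ell q^{\binom{\ell+1}{2}} = (q)_t - \sum_{\ell=0}^{k} \qbinom{t}{\ell}(-1)^\ell q^{\binom{\ell+1}{2}} \, .
\]
Collecting prefactors and invoking the identities $T + k + 1 - \binom{k+2}{2} = T - \binom{k+1}{2}$ together with $\frac{(q)_k(q)_{t-k-1}}{(q)_t^2} = \frac{1}{\qbinom{t-1}{k}(1-q^t)(q)_t}$ would then produce the claimed closed form.

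The main obstacle is the reindexing inside step (iii): both the conversion of $(q^{k+1-t})_j$ into a positive-exponent product and the shift $j \mapsto \ell$ redistribute powers of $q$ and signs between summand and prefactor, and these must be balanced carefully so that the $-\binom{k+1}{2}$ in the $q$-exponent and the single $\qbinom{t-1}{k}$ in the denominator emerge cleanly. Once this bookkeeping is done, the appeal to the $q$-binomial theorem is essentially identical to the $k=1$ case already treated.
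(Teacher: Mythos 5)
Your proposal is correct and follows essentially the same route as the paper's own proof: the same sum over the smallest part $m$ yielding $q^{T+k+1}(q)_{t+1}^{-1}\sum_{m\ge 0}(q)_m q^{(k+1)m}/(q^{t+2})_m$, the same application of Heine's transformation with $a=b=q$, $c=q^{t+2}$, $z=q^{k+1}$, the same termination and negative-exponent Pochhammer conversion, the same shift $\ell=j+k+1$, and the same finish via the $q$-binomial theorem; all of the intermediate identities you state (the exponent $\binom{\ell+1}{2}-\binom{k+2}{2}$, the factor $\frac{(q)_{t-k-1}(q)_{k+1}}{(q)_t}\qbinom{t}{\ell}$, and the two closing prefactor identities) check out.
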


For example, for $k=2$ and $t_1 = t_2 = 2$, we have $p(11,2,2) = 2$ since $1+1+1+3+5$ and $1+2+3+5$ are the
unique two partitions of 11 that contain three parts whose consecutive distances are 2.
Theorem \ref{thm:specdist} says in this case
\[
  P_{ 2, 2 } (q) = \frac{ q^9 + q^{ 10 } + q^{ 11 } + q ^{ 12 } - q^{ 13 } }{ (1-q^2) (1-q^3)^2 (1-q^4)^2 } 
\]
which translates to
\[
  p(n,2,2) = \frac{ 1 }{ 6912 } \begin{cases}
    3 n^4 - 20 n^3 - 24 n^2 + 288 n & \text{ if } n \equiv 0 \bmod 12 , \\
    3 n^4 - 20 n^3 - 78 n^2 + 492 n - 397 & \text{ if } n \equiv 1 \bmod 12 , \\
    3 n^4 - 20 n^3 - 24 n^2 - 48 n + 304 & \text{ if } n \equiv 2 \bmod 12 , \\
    3 n^4 - 20 n^3 - 78 n^2 + 1260 n - 2781 & \text{ if } n \equiv 3 \bmod 12 , \\
    3 n^4 - 20 n^3 - 24 n^2 - 480 n + 2816 & \text{ if } n \equiv 4 \bmod 12 , \\
    3 n^4 - 20 n^3 - 78 n^2 + 492 n + 155 & \text{ if } n \equiv 5 \bmod 12 , \\
    3 n^4 - 20 n^3 - 24 n^2 + 720 n - 3024 & \text{ if } n \equiv 6 \bmod 12 , \\
    3 n^4 - 20 n^3 - 78 n^2 + 492 n + 35 & \text{ if } n \equiv 7 \bmod 12 , \\
    3 n^4 - 20 n^3 - 24 n^2 - 480 n + 3328 & \text{ if } n \equiv 8 \bmod 12 , \\
    3 n^4 - 20 n^3 - 78 n^2 + 1260 n - 3213 & \text{ if } n \equiv 9 \bmod 12 , \\
    3 n^4 - 20 n^3 - 24 n^2 - 48 n - 208 & \text{ if } n \equiv 10 \bmod 12 , \\
    3 n^4 - 20 n^3 - 78 n^2 + 492 n + 547 & \text{ if } n \equiv 11 \bmod 12 .
  \end{cases}
\]



\begin{proof}[Proof of Theorem \ref{thm:specdist}]
Again we start with the natural generating function
\begin{align*}
  P_{ t_1, \dots, t_k } (q)
  &= \sum_{ m \ge 1 } \frac{ q^m \, q^{ m+t_1 } \, q^{ m+t_1+t_2 } \cdots q^{ m+t_1 + t_2 + \dots + t_k } }{ (1-q^m) (1-q^{ m+1 }) \cdots (1-q^{ m+t_1 + t_2 + \dots + t_k }) }
   = \sum_{ m \ge 1 } \frac{ q^{ (k+1)m + T } }{ (q^m)_{ t+1 } } \\
  &= \sum_{ m \ge 1 } \frac{ q^{ (k+1)m + T } (q)_{ m-1 } }{ (q)_{ m+t } }
   = q^{ T+k+1 } \sum_{ m \ge 0 } \frac{ q^{ (k+1)m } (q)_{ m } }{ (q)_{ m+t+1 } }
   = \frac{ q^{ T+k+1 } }{ (q)_{ t+1 } } \sum_{ m \ge 0 } \frac{ (q)_m (q)_m \, q^{ (k+1)m } }{ (q)_m
(q^{ t+2 })_m } \\
  &\stackrel{ \eqref{eq:heine} }{ = } \frac{ q^{ T+k+1 } (q^{ t+1 })_\infty (q^{ k+2 })_\infty }{
(q)_{ t+1 } (q^{ k+1 })_\infty (q^{ t+2 })_\infty } \sum_{ j \ge 0 } \frac{ (q^{ k+1-t })_j (q)_j q^{
(t+1)j} }{ (q)_j (q^{ k+2 })_j } \\
  &= \frac{ q^{ T+k+1 } (q)_k }{ (q)_t } \sum_{ j=0 }^{ t-k-1 } \frac{ (q^{ -(t-k+1) })_j q^{ (t+1)j}
}{ (q)_{j+k+1} }
\end{align*}
\begin{align*}
  &= \frac{ q^{ T+k+1 } (q)_k }{ (q)_t } \sum_{ j=0 }^{ t-k-1 } \frac{ (1-q^{ t-k-1 }) (1-q^{ t-k-2 })
\cdots (1-q^{ t-k-j }) (-1)^j q^{ \binom j 2 - j(t-k-1) + (t+1)j } }{ (q)_{j+k+1} } \\
  &= \frac{ q^{ T+k+1 } (q)_k }{ (q)_t } \sum_{ j=0 }^{ t-k-1 } \frac{ (q)_{ t-k-1 } (-1)^j q^{ \binom
{j+1} 2 + j(k+1) } }{ (q)_{j+k+1} (q)_{j-k-j-1} } \\
  &= \frac{ q^{ T+k+1 } (q)_k (q)_{ t-k-1 } }{ (q)_t (q)_t } \sum_{ j=0 }^{ t-k-1 } \qbinom t {j+k+1}
(-1)^j q^{ \binom {j+k+2} 2 - \binom {k+2} 2 } \\
  &= \frac{ q^{ T+k+1 } (q)_k }{ \qbinom {t-1} k (1-q^t) (q)_t } \sum_{ j=0 }^{ t-k-1 } \qbinom t
{j+k+1} (-1)^j q^{ \binom {j+k+2} 2 - \binom {k+2} 2 } \\
  &= \frac{ q^{ T - \binom {k+1} 2 } (-1)^{ k+1 } }{ \qbinom {t-1} k (1-q^t) (q)_t } \sum_{ j=k+1 }^{
t } \qbinom t j (-1)^j q^{ \binom {j+1} 2 } \\
  &= \frac{ q^{ T - \binom {k+1} 2 } (-1)^{ k } }{ \qbinom {t-1} k (1-q^t) (q)_t } \left( \sum_{ j=0
}^k \qbinom t j (-1)^j q^{ \binom {j+1} 2 } - (q)_t \right) . \qedhere
\end{align*}
\end{proof}

\bibliographystyle{amsplain}  
\bibliography{bib}  

\end{document}